\theoremstyle{plain}
\newtheorem*{theorem}{Theorem}
\title{\vspace{-1in}\sc Counting Parity Palindrome Compositions}
\author{%
Vincent Vatter\\[-0.25ex]
\small Department of Mathematics\\[-0.5ex]
\small University of Florida\\[-0.5ex]
\small Gainesville, Florida USA\\[-1.5ex]
}
\date{}
\begin{document}
\maketitle
\thispagestyle{empty}

%\blfootnote{This note appeared as a ``MathBit'' in \emph{Amer. Math. Monthly} {\bf 127}(1) (2020), 63.}

\noindent A \emph{composition} of the positive integer $n$ is an ordered sequence of positive integers (its \emph{parts}) that sum to $n$. A \emph{parity palindrome composition} (\emph{ppc}) is a composition that reads the same both forward and backward when reduced modulo~$2$; for example, $32141$ is a ppc of $11$.
(For brevity, we do not write a $+$ between the parts of our compositions.)
% because it reduces to $10101$, which is a palindrome.
%The number of compositions of $n$ is classically $2^{n-1}$.
Andrews and Simay~\cite{andrews:parity-palindro:} showed that ppcs have a surprisingly simple formula; see also Just~\cite{just:compositions-th:}. We give a recursive proof.

\begin{theorem}
For all $n\ge 1$, the number of ppcs of $2n$ \emph{(}resp., $2n+1$\emph{)} is $2\cdot 3^{n-1}$.
\end{theorem}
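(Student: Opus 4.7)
The plan is to set up a recursion for $p(n)$, the number of ppcs of $n$, via outer-pair decomposition, and then to extract the two-step recurrence $p(n)=3p(n-2)$ for $n\ge 4$ by algebraically comparing the recursion at $n$ and at $n-2$. Together with the base cases $p(2)=p(3)=2$, a routine induction then yields the formula.

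Adopting the convention $p(0)=1$, I would first observe that every ppc of $n\ge 1$ is either the single part $(n)$ or has length at least $2$; in the latter case its outermost parts $c_1,c_k$ must share a parity (equivalently, $c_1+c_k$ is even), and the middle $c_2\cdots c_{k-1}$ is itself a ppc of $n-c_1-c_k$ (or empty when $c_1+c_k=n$). Since for any even $s=2j\ge 2$ all $s-1=2j-1$ ordered pairs of positive integers summing to $s$ automatically share a parity, this yields
\[
p(n)=1+\sum_{j=1}^{\lfloor n/2\rfloor}(2j-1)\,p(n-2j).
\]

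To extract $p(n)=3p(n-2)$, I would subtract the instance of this identity at $n-2$ (reindexed by $j\mapsto j+1$) from the instance at $n$; the coefficients of $p(n-2j)$ for $j\ge 2$ telescope via $(2j-1)-(2(j-1)-1)=2$, giving
\[
p(n)=2p(n-2)+2\sum_{j=2}^{\lfloor n/2\rfloor}p(n-2j).
\]
Applying this same identity with $n$ replaced by $n-2$ and reindexing then shows that $2\sum_{j=2}^{\lfloor n/2\rfloor}p(n-2j)=p(n-2)$, so the displayed relation collapses to $p(n)=3p(n-2)$ for $n\ge 4$. The base cases $p(2)=2$ and $p(3)=2$ are immediate (namely $2,11$ and $3,111$), closing the induction.

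The main obstacle is purely bookkeeping: one has to align the summation ranges $\lfloor n/2\rfloor$ versus $\lfloor(n-2)/2\rfloor$ correctly when subtracting instances of the recursion, and then verify that the identity re-applied at $n-2$ really does simplify the inner sum to $p(n-2)$. Once that index chasing is handled carefully, the reduction to $p(n)=3p(n-2)$ is a one-line algebraic step.
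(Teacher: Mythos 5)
Your proof is correct, but it takes a genuinely different route from the paper's. You peel off the outermost pair of parts: a ppc of $n$ is either a single part or is obtained from a (possibly empty) ppc of $n-2j$ by wrapping it in an ordered pair of equal-parity parts summing to $2j$, of which there are $2j-1$; this yields the convolution-type recursion $p(n)=1+\sum_{j=1}^{\lfloor n/2\rfloor}(2j-1)p(n-2j)$, and two rounds of subtracting shifted instances telescope it to $p(n)=3p(n-2)$. The paper instead argues locally at the ends of the composition: it defines production rules (surround by two $1$'s; add $1$ to both end parts; add $2$ to one end when the other end is $1$), classifies ppcs into three types by their end parts, and checks that each type produces the three types in a pattern that both triples the count and preserves the equidistribution of the types. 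Your route buys a short, purely algebraic derivation with no invariant to discover --- the index chasing does all the work --- while the paper's scheme is a direct production argument on individual compositions that makes the factor of $3$ combinatorially visible (each generation is exactly three times the previous, one third of each type). One boundary point in your write-up: the final application of your identity with $n$ replaced by $n-2$ requires $n-2\ge 3$, since the recursion was derived only for positive $n$; so either check $p(4)=6=3\,p(2)$ directly (immediate from your first display) or note that the recursion also holds at $n=0$ under your convention $p(0)=1$. With that, the induction from $p(2)=p(3)=2$ goes through and gives $2\cdot 3^{n-1}$ in both the even and odd cases.
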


\begin{proof}
We say that a ppc of $2n$ (resp., $2n+1$) \emph{produces} two to four ppcs of $2n+2$ (resp., $2n+3$) by the following rules:
\begin{enumerate}
	\item[(A)] introduce two new parts equal to $1$, one on each side of the ppc;
	\item[(B)] add $1$ to both the first and last parts, or if it has only one part, add $2$ to it;
	\item[(C$_1$)] if the first part is $1$, add $2$ to the last part;
	\item[(C$_2$)] if the last part is $1$, add $2$ to the first part.
\end{enumerate}

The ppcs of $2$, $3$, $4$, and $5$ are shown below, with lines to indicate productions. This proves the theorem for $n=1$ and $n=2$.
\begin{center}
\begin{tikzpicture}[xscale=0.85]
\node (11) at (2.5,1) {$11$};
\node (1111) at (1,0) {$1111$};
\node (22) at (2,0) {$22$};
\node (13) at (3,0) {$13$};
\node (31) at (4,0) {$31$};
\draw (11)--(1111); % node [midway, left=0pt] {\tiny A};
\draw (11)--(22); % node [midway, left=-3pt] {\tiny B};
\draw (11)--(13); % node [midway, left=-2pt] {\tiny C};
\draw (11)--(31); % node [midway, left=-2pt] {\tiny D};

\node (2) at (5.5,1) {$2$};
\node (121) at (5,0) {$121$};
\node (4) at (6,0) {$4$};
\draw (2)--(121);
\draw (2)--(4);
\end{tikzpicture}
\quad
\quad
\quad
\begin{tikzpicture}[xscale=0.85]
\node (111) at (8.5,1) {$111$};
\node (11111) at (7,0) {$11111$};
\node (212) at (8,0) {$212$};
\node (113) at (9,0) {$113$};
\node (311) at (10,0) {$311$};
\draw (111)--(11111);
\draw (111)--(212);
\draw (111)--(113);
\draw (111)--(311);

\node (3) at (11.5,1) {$3$};
\node (131) at (11,0) {$131$};
\node (5) at (12,0) {$5$};
\draw (3)--(131);
\draw (3)--(5);
\end{tikzpicture}
\end{center}

To verify that every ppc is produced precisely once, we split them into types:
\begin{enumerate}
	\item[(A)] if the first and last part are both $1$, it was produced by rule (A);
	\item[(B)] if the first and last parts are both at least $2$, it was produced by rule (B);
	\item[(C)] if the first part is $1$ and the last part is at least $2$ (or vice versa), then the last part must actually be at least $3$ by the parity palindrome constraint, and the ppc was produced by rule (C$_1$) (or (C$_2$) in the vice versa case).
\end{enumerate}

One can use the picture above to check that for ppcs of $4$ and $5$, precisely one-third are of each of the types (A), (B), and (C).
%Of these three equally-sized groups,
Those of type (A) produce $4$ ppcs, one each of types (A) and (B) and two of type (C); those of type (B) produce $2$ ppcs, of types (A) and (B); and those of type (C) produce $3$ ppcs, of types (A), (B), and (C). This ensures both that the number of ppcs is tripled at each stage and that at every stage precisely one-third of the ppcs are of each type, proving the theorem for all $n\ge 3$.
\end{proof}

\end{document}